\documentclass[11pt, final]{amsart}

\title[Ricci soliton hypersurfaces in the complex hyperbolic spaces]{Homogeneous Ricci soliton hypersurfaces in the complex hyperbolic spaces}
\author[T.\ Hashinaga]{Takahiro Hashinaga}
\author[A.\ Kubo]{Akira Kubo}
\author[H.\ Tamaru]{Hiroshi Tamaru}
\address[T.\ Hashinaga]{Department of Mathematics, Hiroshima University, 
Higashi-Hiroshima 739-8526, Japan}
\address[A.\ Kubo]{Department of Mathematics, Hiroshima University, 
Higashi-Hiroshima 739-8526, Japan}
\address[H.\ Tamaru]{Department of Mathematics, Hiroshima University, 
Higashi-Hiroshima 739-8526, Japan}
\email[A.\ Kubo]{akira-kubo@hiroshima-u.ac.jp}
\email[T.\ Hashinaga]{hashinaga@hiroshima-u.ac.jp}
\email[H.\ Tamaru]{tamaru@math.sci.hiroshima-u.ac.jp}

\keywords{Real hypersurfaces, homogeneous submanifolds, complex hyperbolic spaces, Ricci solitons.}
\thanks{2010 \textit{Mathematics Subject Classification}. 
Primary~53C40, Secondary~53C30, 53C25, 53C35}
\thanks{The first author was supported in part by Grant-in-Aid for JSPS Fellows (11J05284).
	The third author was supported in part by KAKENHI (24654012).}

\usepackage{a4}
\usepackage{amssymb}
\usepackage{enumerate}

\theoremstyle{definition}
\newtheorem{Def}{Definition}[section]

\theoremstyle{plain}
\newtheorem{Prop}[Def]{Proposition}
\newtheorem{Thm}[Def]{Theorem}
\newtheorem{Lem}[Def]{Lemma}

\theoremstyle{definition}
\newtheorem{Remark}[Def]{Remark}

\newtheorem*{Theorem}{Theorem}

\numberwithin{equation}{section}

\def\fr#1{\mathfrak{#1}}

\newcommand{\ad}{\mathop{\mathrm{ad}}\nolimits}

\newcommand{\Der}{\mathop{\mathrm{Der}}\nolimits}
\renewcommand{\span}{\mathop{\mathrm{span}}\nolimits}

\newcommand{\CH}{\mathbb{C} \mathrm{H}}
\newcommand{\RH}{\mathbb{R} \mathrm{H}}
\newcommand{\Ric}{\mathop{\mathrm{Ric}}\nolimits}
\newcommand{\id}{\mathop{\mathrm{id}}\nolimits}

\newcommand{\iro}{}
\newcommand{\irotwo}{}

\begin{document}

\begin{abstract}
A Lie hypersurface in the complex hyperbolic space 
is an orbit of a cohomogeneity one action without singular orbit. 
In this paper, we 
classify 
Ricci soliton Lie hypersurfaces in the complex hyperbolic spaces. 
\end{abstract}

\maketitle 

\section{Introduction}
Homogeneous submanifolds in Riemannian symmetric spaces of noncompact type $G/K$ 
have provided a lot of interesting examples of submanifolds. 
We refer to \cite{B, BDT, BT03, BT04, BT07, DDK, HHT, KT, T} 
and references therein. %
Typical and the simplest examples are given by the nilpotent part $N$ 
of the Iwasawa decomposition $G = KAN$. 
All orbits of $N$ in $G/K$ are isometrically congruent to each other 
(see \cite{BDT, KT}), 
and they are known to be Ricci solitons (\cite{L01}). 
Recall that a 
complete Riemannian manifold $(M, g)$ is called a \textit{Ricci soliton} if 
\begin{equation}
	\Ric_g = cg - \frac{1}{2}\mathcal{L}_X g
\end{equation}
holds
for some $c \in \mathbb{R}$ and some complete vector field $X$ on $M$, 
where $\Ric_g$ denotes the Ricci operator of $(M, g)$ and 
$\mathcal{L}_X$ is the usual Lie derivative. 
The vector field $X$ is called the 
\textit{potential vector field}. 
A Ricci soliton is a natural generalization of an Einstein manifold. 

In symmetric spaces of noncompact type,
homogeneous hypersurfaces called Lie hypersurfaces are of particular interest. 
For an isometric action on a Riemannian manifold, maximal
dimensional orbits are said to be \textit{regular}, 
and other orbits \textit{singular}. 
The codimension of a regular orbit is called the \textit{cohomogeneity} of an action. 
A \textit{Lie hypersurface} is 
an orbit of a cohomogeneity one action without singular orbit. 
The notion of Lie hypersurfaces has been introduced by Berndt (\cite{B}). 
For symmetric spaces of noncompact type $G/K$, 
the orbits of a cohomogeneity one action without singular orbit 
form a Riemannian foliation (\cite{BB}). 
Furthermore, when $G/K$ is irreducible, 
such actions have been completely classified up to orbit equivalence (\cite{BT03}). 
Recall that two isometric actions on a Riemannian manifold are said to be 
\textit{orbit equivalent} if 
there exists an isometry of the manifold mapping the orbits 
of one of these actions onto the orbits of the other action. 

In this paper, we focus on Lie hypersurfaces in the 
complex hyperbolic spaces 
\begin{equation}
\CH^n = \mathrm{SU}(1,n)/\mathrm{S}(\mathrm{U}(1)\times\mathrm{U}(n)) 
\end{equation}
with $n \geq 2$. 
Owing to \cite{BT03}, 
for rank one symmetric spaces of noncompact type, 
there exist exactly two cohomogeneity one actions 
without singular orbit up to orbit equivalence. 
One of the actions is given by $N$, 
the nilpotent part of the Iwasawa decomposition of the isometry group. 
The orbits of $N$, which are isometrically congruent to each other, are horospheres. 
The other action induces the so-called 
\textit{solvable foliation}.
In the case of the real hyperbolic space 
$\RH^n$, 
intrinsic geometry of the Lie hypersurfaces is well-known. 
In fact, the horosphere is flat, and 
the solvable foliation consists of 
a totally geodesic $\RH^{n-1}$ and its equidistant hypersurfaces. 
It is easy to see that 
all Lie hypersurfaces in $\RH^n$ have constant curvature. 
On the contrary, in the case of $\CH^n$ with $n \ge 2$, 
the situation is much more interesting and nontrivial. 
In this case, the solvable foliation consists of
the homogeneous ruled minimal hypersurface and its equidistant hypersurfaces. 
The homogeneous ruled minimal hypersurface in $\CH^n$ 
is the ruled real hypersurface determined by 
a horocycle in a totally geodesic $\RH^2$ in $\CH^n$ (\cite{LR}). 
Note that this is isometric to a fan (we refer to \cite{GG}).

The purpose of this paper is to study and classify Ricci soliton Lie hypersurfaces in $\CH^n$. 
It is well-known that there exist no Einstein hypersurfaces in 
$\mathbb{C} \mathrm{H}^n$ (see \cite{R}). 
On the other hand, as we mentioned before, 
the horosphere is a Ricci soliton. 
Our main theorem classifies Ricci soliton Lie hypersurfaces in $\CH^n$. 
It would be interesting that the result is different in the cases $n=2$ and $n>2$. 

\begin{Theorem}
A Lie hypersurface in $\mathbb{C} \mathrm{H}^n$ is a Ricci soliton if and only if 
\begin{enumerate}
\item[$(1)$]
it is isometrically congruent to a horosphere, or 
\item[$(2)$]
$n=2$ and it is isometrically congruent to 
the homogeneous ruled minimal hypersurface. 
\end{enumerate}
\end{Theorem}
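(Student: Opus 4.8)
The plan is to reduce the statement to a computation on a single one-parameter family of metric solvable Lie algebras, and then to settle that family by the algebraic theory of Ricci solitons on solvmanifolds. First I would fix the algebraic model: writing the solvable part of the Iwasawa decomposition as $\mathfrak{s} = \mathfrak{a}\oplus\mathfrak{g}_\alpha\oplus\mathfrak{g}_{2\alpha}$, which acts simply transitively on $\CH^n$, where $\mathfrak{g}_\alpha\cong\mathbb{C}^{n-1}$ carries the complex structure $J$, $\mathfrak{a}=\mathbb{R}A$, and $\mathfrak{g}_{2\alpha}=\mathbb{R}Z$, with the standard bracket relations. By the classification \cite{BT03} together with the description of the two foliations recalled in the introduction, every Lie hypersurface is isometrically congruent to the orbit through the origin of the connected subgroup $H_\theta\subset\exp(\mathfrak{s})$ whose Lie algebra is the hyperplane subalgebra
\[
\mathfrak{h}_\theta = \mathbb{R}(\sin\theta\,A-\cos\theta\,V)\oplus(\mathfrak{g}_\alpha\ominus\mathbb{R}V)\oplus\mathfrak{g}_{2\alpha},\qquad \theta\in[0,\pi/2],
\]
for a fixed unit vector $V\in\mathfrak{g}_\alpha$; here $\theta=0$ gives the horosphere ($\mathfrak{h}_0=\mathfrak{n}$), $\theta=\pi/2$ gives the homogeneous ruled minimal hypersurface, and $\theta\in(0,\pi/2)$ gives the equidistant hypersurfaces of the solvable foliation. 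I would first check that each $\mathfrak{h}_\theta$ is indeed a subalgebra, record the induced inner product and brackets, and thereby recast the theorem as a question about the left-invariant metric on each solvable Lie group $H_\theta$.

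The second step is to invoke Lauret's characterization: a solvmanifold is a Ricci soliton if and only if its metric solvable Lie algebra is a solvsoliton, i.e. $\Ric = c\,\id + D$ for some $c\in\mathbb{R}$ and some $D\in\Der(\mathfrak{h}_\theta)$. The easy direction already settles case $(1)$: the horosphere is the Heisenberg group $N$ with its induced metric, whose Ricci operator differs from a multiple of the identity by the grading derivation, so it is a nilsoliton and hence a Ricci soliton (\cite{L01}). For the remaining hypersurfaces I would compute the Ricci operator $\Ric_\theta$ of $(\mathfrak{h}_\theta,\langle\cdot,\cdot\rangle)$ explicitly, either from the standard formula for left-invariant metrics or, equivalently, through the Gauss equation and the shape operator in $\CH^n$. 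The outcome is a symmetric endomorphism whose block structure is governed by the splitting of $\mathfrak{g}_\alpha\ominus\mathbb{R}V$ into the line $\mathbb{R}JV$ and its orthogonal complement.

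The decisive step is the linear-algebra analysis: determine $\Der(\mathfrak{h}_\theta)$ and decide for which pairs $(\theta,n)$ the equation $\Ric_\theta-c\,\id\in\Der(\mathfrak{h}_\theta)$ admits a solution $c$. I expect the dichotomy between $n=2$ and $n>2$ to emerge precisely here. For $n=2$ the space $\mathfrak{g}_\alpha\ominus\mathbb{R}V$ collapses to the single line $\mathbb{R}JV$, the nilradical becomes abelian, and at $\theta=\pi/2$ the resulting rank-one solvable extension satisfies the solvsoliton equation, while the off-diagonal term $-\tfrac{1}{2}\cos\theta\,Z$ appearing in $\ad_{(\sin\theta\,A-\cos\theta\,V)}$ destroys it for every $\theta\in(0,\pi/2)$. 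For $n>2$ the Heisenberg bracket induces a degenerate symplectic form on $\mathfrak{g}_\alpha\ominus\mathbb{R}V$ (with one-dimensional kernel $\mathbb{R}JV$), and I expect this extra structure to obstruct the derivation condition even at $\theta=\pi/2$, leaving only the horosphere.

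Carrying out this incompatibility argument — showing that the mixing of eigenspaces forced by the $\langle JV,\cdot\rangle Z$ term and, for $n>2$, by the degenerate symplectic form makes $\Ric_\theta-c\,\id$ fail to be a derivation for all $c$ — is where the real work lies, rather than in the routine Ricci computation itself. A secondary point requiring care is the legitimacy of reducing the Ricci soliton property to the algebraic solvsoliton equation on the given algebra $\mathfrak{h}_\theta$, which I would justify by appealing to the uniqueness up to isometry of solvsolitons within their isometry class.
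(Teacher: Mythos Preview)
Your plan is essentially the paper's proof: reduce to the one-parameter family of metric solvable Lie algebras, pass to the algebraic Ricci soliton equation via Lauret, verify the horosphere and the $n=2$ ruled hypersurface directly, and rule out the remaining cases by showing $\Ric_\theta - c\,\id$ is never a derivation (the paper pinpoints the obstruction as the off-diagonal entry $\langle \Ric(Z_0),JV\rangle=(n/2)\sin\theta\cos\theta$ for intermediate $\theta$, and a pair of trace identities on $\Der$ for the ruled case with $n>2$). One point to tighten: the paper secures the equivalence ``Ricci soliton $\Leftrightarrow$ algebraic Ricci soliton'' by checking directly that each $\mathfrak{h}_\theta$ is \emph{completely solvable} (the adjoint maps are triangular in the natural basis) and then invoking Lauret; your appeal to ``uniqueness up to isometry of solvsolitons'' does not by itself give this direction, so you should replace it with the complete-solvability check.
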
 

The above theorem is relevant to the results by Cho and Kimura (\cite{CK11}). 
Among others, for real hypersurfaces in non-flat complex space forms, 
they proved 
\begin{enumerate}
\item[-]
there do not exist compact Hopf hypersurfaces which are Ricci solitons, 
\item[-]
there do not exist ruled hypersurfaces which are gradient Ricci solitons. 
\end{enumerate}
Recall that a Ricci soliton is said to be 
\textit{gradient} if the potential vector field 
can be expressed as the gradient of a smooth function. 
We also recall that 
a hypersurface in $\CH^n$ is said to be \textit{Hopf} 
if the structure vector field $J \xi$ is an eigenvector of the shape operator, 
where $J$ is the complex structure and $\xi$ is a unit normal vector. 
Our theorem implies that the assumptions on these 
nonexistence results cannot be removed. 
Note that a horosphere is a Hopf hypersurface (see \cite{B, HHT}), 
which is a Ricci soliton, but not compact. 
When $n=2$, the homogeneous ruled minimal hypersurface is a ruled hypersurface, 
which is a Ricci soliton, but not gradient (see \cite{L11}). 

This paper is organized as follows. 
In Section~2, we recall algebraic Ricci solitons, 
which play important roles in studying homogeneous Ricci solitons. 
In Section~3, we recall some necessary results on Lie hypersurfaces in $\CH^n$. 
In Section~4 we prove the main theorem. 

\section{Ricci solitons and algebraic Ricci solitons}

The notion of algebraic Ricci solitons has essentially been 
introduced by Lauret (\cite{L01, L11}). 
In this section, we recall a 
relationship 
between left-invariant Ricci solitons and algebraic Ricci solitons. 
Let $G$ be a simply-connected Lie group, 
and $g$ be a left-invariant metric on $G$. 
The \textit{corresponding metric Lie algebra} of $(G, g)$ 
is a pair $(\fr{g}, \langle, \rangle)$, 
where $\fr{g}$ is the Lie algebra of $G$ 
and $\langle, \rangle$ is the inner product on $\fr{g}$ corresponding to $g$. 
We can study curvatures of $(G, g)$ in term of 
the corresponding metric Lie algebra $(\fr{g}, \langle, \rangle)$. 

First of all, 
we recall some curvature formulas 
for a metric Lie algebra $(\fr{g}, \langle, \rangle)$. 
Let $X, Y \in \fr{g}$.
The Levi-Civita connection $\nabla$ of $(\fr{g}, \langle, \rangle)$ is given by 
\begin{equation}
	2\langle \nabla_X Y, Z \rangle
         = \langle [X, Y], Z \rangle + \langle [Z,X], Y \rangle + \langle X, [Z, Y] \rangle \quad (Z \in \fr{g}).
\end{equation}
Then the \textit{Riemannian curvature} $R$ of $(\fr{g}, \langle, \rangle)$ 
is defined by
\begin{equation}
	R(X, Y) := \nabla_{[X,Y ]} - \nabla_X \nabla_Y + \nabla_Y \nabla_X,
\end{equation}
and the \textit{Ricci operator} $\Ric$ of $(\fr{g}, \langle, \rangle)$ is defined by 
\begin{equation}
	\Ric(X) := \sum R(E_i, X) E_i,
\end{equation}
where $\{E_i\}$ is an orthonormal basis of $(\fr{g}, \langle, \rangle)$. 

We now recall the notion of algebraic Ricci solitons. 
For a Lie algebra $\fr{g}$, 
let us denote the Lie algebra of derivations by 
\begin{equation}
	\Der(\fr{g}) := \{D \in \fr{gl}(\fr{g}) \mid D[X, Y] = [D(X), Y] + [X, D(Y)] \ (\forall X, Y \in \fr{g}) \} . 
\end{equation} 

\begin{Def}
A metric Lie algebra $(\fr{g}, \langle, \rangle)$ is called an \textit{algebraic Ricci soliton}
 if the following holds: 
	\begin{equation}\label{eq_algebraic}
	\Ric = c \cdot \id + D, 
	\end{equation}
for some $c \in \mathbb{R}$ and $D \in \Der{(\fr{g})}$. 
\end{Def}

A 
relationship 
between left-invariant Ricci solitons 
and algebraic Ricci solitons is given as follows. 
According to \cite{L11}, 
if the corresponding metric Lie algebra $(\fr{g}, \langle, \rangle)$ is an algebraic Ricci soliton, 
then $(G, g)$ is a Ricci soliton. 
The converse also holds if $G$ is completely solvable. 
Recall that $G$ is said to be 
\textit{completely solvable} 
if $G$ is solvable and the eigenvalues of any $\ad X$ are all real. 

\begin{Thm}[\cite{L11}] \label{iff}
Let $(G, g)$ be a simply-connected completely solvable Lie group equipped with a left-invariant metric, 
 and $(\fr{g}, \langle, \rangle)$ be the corresponding metric Lie algebra.
Then $(G, g)$ is a Ricci soliton if and only if $(\fr{g}, \langle, \rangle)$ is an algebraic Ricci soliton.
\end{Thm}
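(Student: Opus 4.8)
The plan is to prove the two implications separately and to note that only the converse requires complete solvability.

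\emph{The ``if'' direction.} Suppose $(\fr g, \langle, \rangle)$ is an algebraic Ricci soliton, so that $\Ric = c \cdot \id + D$ for some $c \in \mathbb{R}$ and $D \in \Der(\fr g)$. Since $\Ric$ and $\id$ are symmetric with respect to $\langle, \rangle$, the derivation $D = \Ric - c \cdot \id$ is automatically a symmetric operator. I would integrate $D$ to a one-parameter group of Lie algebra automorphisms $\exp(sD) \in \mathrm{Aut}(\fr g)$, and use the simple connectivity of $G$ to lift these to a one-parameter group $\{\Phi_s\}$ of automorphisms of $G$ with $(d\Phi_s)_e = \exp(sD)$. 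Let $X$ be the vector field on $G$ generating $\{\Phi_s\}$. Because each $\Phi_s$ is an automorphism, $\Phi_s^\ast g$ is again left-invariant, and at the identity one computes
\begin{equation}
(\mathcal{L}_X g)(Y, Z)|_e = \langle DY, Z \rangle + \langle Y, DZ \rangle = 2 \langle DY, Z \rangle \quad (Y, Z \in \fr g),
\end{equation}
using that $D$ is symmetric. Hence $\tfrac{1}{2}\mathcal{L}_X g$ corresponds to the operator $D$ at $e$, and by homogeneity everywhere. Replacing $X$ by $-X$ to match signs then yields $\Ric_g = c g - \tfrac{1}{2}\mathcal{L}_X g$, and $X$ is complete since the flow $\{\Phi_s\}$ is globally defined on $G$. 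This direction uses neither solvability nor complete solvability.

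\emph{The ``only if'' direction.} Now suppose $(G, g)$ is a Ricci soliton. The standard fact from Ricci flow theory is that the soliton equation is equivalent to self-similarity of the flow: the solution through $g$ has the form $g(t) = (1 - 2ct)\,\varphi_t^\ast g$ for a one-parameter family of diffeomorphisms $\varphi_t$ generated, up to reparametrisation, by the potential field $X$. Differentiating $-2\Ric_{g(t)} = \partial_t g(t)$ at $t = 0$ recovers the soliton equation, so the task is to show that the diffeomorphism part can be chosen to act by automorphisms of $G$, for then its generator yields a symmetric derivation $D$ with $\Ric = c \cdot \id + D$. Since the Ricci operator of a left-invariant metric is itself left-invariant, the solution $g(t)$ stays left-invariant for all $t$; consequently $\varphi_t^\ast g = (1 - 2ct)^{-1} g(t)$ is left-invariant and homothetic to $g$, so each $\varphi_t$ is, modulo left translations and scaling, an isometry of $(G, g)$ fixing $e$. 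Here is where complete solvability enters: I would invoke the structure theorem for the isometry group of a completely solvable metric Lie group (Gordon--Wilson, Alekseevskii), which guarantees that the isotropy subgroup of $\Isom(G, g)$ at the identity acts on $G$ by automorphisms. This forces the $\varphi_t$ to be automorphisms, and differentiating the resulting automorphism flow produces the required $D \in \Der(\fr g)$.

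\emph{Main obstacle.} The substantive point is the converse, and specifically the reduction of the self-similarity diffeomorphisms to automorphisms. Without complete solvability the isotropy of the isometry group need not act by automorphisms, so a left-invariant Ricci soliton could in principle fail to be an algebraic one; the completely solvable hypothesis is exactly what closes this gap. The forward direction, by contrast, is a direct integration argument and holds in full generality.
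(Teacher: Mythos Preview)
The paper does not prove this theorem; it is quoted from Lauret \cite{L11} as a black box and immediately applied. There is therefore no proof in the paper to compare your proposal against.

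That said, your sketch is essentially the argument Lauret gives. The forward direction is exactly the integration-of-a-symmetric-derivation construction you describe. For the converse, you have correctly isolated the crucial ingredient: the Alekseevski\u{\i}--Gordon--Wilson structure result that, on a simply-connected completely solvable Lie group, the isotropy of the full isometry group acts by automorphisms. One small point to tighten: from ``$\varphi_t^\ast g$ is left-invariant'' you jump to ``$\varphi_t$ is, modulo left translations, an isometry of $(G,g)$ fixing $e$,'' but $\varphi_t^\ast g$ is a left-invariant metric \emph{isometric} to $g$ (via $\varphi_t$), not equal to it; one still has to argue that two isometric left-invariant metrics on a completely solvable $G$ are related by an automorphism, which again comes down to the same structure theorem applied after translating $\varphi_t(e)$ back to $e$. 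Lauret in fact packages this via the bracket-flow/uniqueness-of-Ricci-flow argument rather than manipulating the $\varphi_t$ directly, but the content is the same, and your identification of complete solvability as precisely what forces the soliton diffeomorphisms to be automorphisms is the right diagnosis.
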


\section{Lie hypersurfaces in the complex hyperbolic space}

In this section, 
we recall Lie hypersurfaces in the complex hyperbolic space $\CH^n$ 
and their curvature properties. 
We refer to \cite{B, BT03, HHT}. 

\begin{Def}
The triple $(\fr{s}, \langle, \rangle, J)$ is called the 
\textit{solvable model} 
of the complex hyperbolic space $\CH^n$ if 
	\begin{enumerate}[\normalfont (1)]
	  \item $\fr{s}$ is a Lie algebra, and there exists a basis $\{A_0, X_1, Y_1, \ldots, X_{n-1}, Y_{n-1}, Z_0\}$
          	whose bracket products are given by
                $$[A_0, X_i] = (1/2)X_i, \ 
                  [A_0, Y_i] = (1/2)Y_i, \
                  [A_0, Z_0] = Z_0, \
                  [X_i, Y_i] = Z_0,$$
          \item $\langle, \rangle$ is an inner product on $\fr{s}$ so that the above basis is orthonormal, 
          \item $J$ is a complex structure on $\fr{s}$ given by
          	$$J(A_0) = Z_0, \ J(Z_0) = -A_0, \ J(X_i) = Y_i, \ J(Y_i) = -X_i.$$
        \end{enumerate}
\end{Def}

We use $\fr{s}$ instead of $(\fr{s}, \langle, \rangle, J)$ for simplicity, 
and denote by $S$ the corresponding simply-connected solvable Lie group 
with the induced left-invariant Riemannian metric and complex structure. 
It is known that $S$ can be identified with 
$\CH^n$ with holomorphic sectional curvature $-1$. 
In fact, $S$ coincides with 
the solvable part of the Iwasawa decomposition of $\mathrm{SU}(1,n)$. 

In the rest of the section, 
we recall some known results on Lie hypersurfaces in $\CH^n$. 
First of all, we recall the definition. 

\begin{Def}
A \textit{Lie hypersurface} in $\CH^n$ 
is an orbit of a cohomogeneity one action without singular orbits. 
\end{Def}

It is known that there exist exactly two cohomogeneity one actions 
on $\CH^n$ without singular orbit up to orbit equivalence. 

\begin{Thm}[\cite{BT03}]
An isometric action on $\CH^n$ is 
a cohomogeneity one action without singular orbit if and only if 
it is orbit equivalent to one of the actions of $S(\pi/2)$ or $S(0)$, 
where $S(\theta)$ is the connected Lie subgroup of $S$ with Lie algebra 
$$
\fr{s}(\theta) := \fr{s} \ominus \mathbb{R}(\cos(\theta)X_1 + \sin(\theta)A_0) . 
$$
\end{Thm}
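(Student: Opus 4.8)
The plan is to recast the statement as a classification of codimension-one subalgebras of $\fr{s}$ and then to reduce the resulting family by isometries. First I would invoke the structural reduction for homogeneous codimension-one foliations on symmetric spaces of noncompact type (\cite{BB, BT03}): since a cohomogeneity one action without singular orbit on $\CH^n$ produces a homogeneous codimension-one foliation, and since $S$ acts simply transitively, such a foliation is orbit equivalent to the orbit foliation of a connected subgroup $S' \subseteq S$ whose Lie algebra $\fr{s}'$ is a hyperplane subalgebra of $\fr{s}$. Writing $\fr{s}' = \fr{s} \ominus \mathbb{R}\xi$ for a unit vector $\xi$, the problem becomes: determine which $\xi$ make $\fr{s} \ominus \mathbb{R}\xi$ a subalgebra, and classify these up to the congruence generated by $\mathrm{Ad}(S)$ (left translations by $S$ are isometries of the left-invariant metric) together with the base-point isotropy.

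For the subalgebra condition I would introduce the alternating form $\omega(U,V) := \langle [U,V], \xi\rangle$ on $\fr{s}$. Then $\fr{s} \ominus \mathbb{R}\xi = \xi^{\perp}$ is a subalgebra if and only if $\omega$ vanishes on $\xi^{\perp} \times \xi^{\perp}$, which is equivalent to $\omega$ being decomposable of the form $\omega = \xi^{\flat} \wedge \mu$ with $\xi^{\flat} = \langle \xi, \cdot\rangle$ and $\mu = \iota_{\xi}\omega$. Reading off $\omega$ from the bracket relations of the solvable model, one gets $\omega = A_0^{\flat} \wedge \eta + d\, \Omega_0$, where $d$ is the $Z_0$-component of $\xi$, where $\Omega_0 = \sum_i X_i^{\flat} \wedge Y_i^{\flat}$, and where $\eta$ collects the remaining terms. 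Since $\Omega_0$ is nondegenerate on $\span\{X_i, Y_i\}$, a rank count (for instance $\omega \wedge \omega = 2d^2\,(\text{the volume form on the } A_0,X_1,Y_1,Z_0 \text{ block}) \ne 0$ when $d \ne 0$) shows that decomposability forces $d = 0$; conversely, when $d = 0$ one checks directly that $\xi^{\flat} = aA_0^{\flat} + 2\eta$ lies in $\span\{A_0^{\flat}, \eta\}$, so $\omega$ factors and $\fr{s} \ominus \mathbb{R}\xi$ is indeed a subalgebra. Hence the hyperplane subalgebras are exactly those with $\xi \in \mathbb{R}A_0 \oplus \span\{X_i, Y_i\}$.

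Next I would normalize $\xi$. Writing $\xi = aA_0 + v$ with $v \in \span\{X_i, Y_i\}$, the centralizer of $\fr{a}$ in $K$ acts on the root space $\span\{X_i, Y_i\} \cong \mathbb{C}^{n-1}$ as unitary transformations fixing $A_0$, so an isometry rotates $v$ to $|v|X_1$. Thus $\xi$ is congruent to a unit vector $\cos\theta\, X_1 + \sin\theta\, A_0$, and the foliation is congruent to that of $S(\theta)$. The decisive final step collapses the parameter: for $\theta \in [0, \pi/2)$ I would exhibit the explicit inner automorphism $\mathrm{Ad}(\exp(2\tan\theta\, X_1))$ and check, using $[X_1, A_0] = -(1/2)X_1$ and $[X_1, Y_1] = Z_0$, that it carries $\fr{s}(0)$ onto $\fr{s}(\theta)$; since this automorphism is induced by left translation by an element of $S$, all these foliations are orbit equivalent to $S(0)$. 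The limiting value $\theta = \pi/2$ gives $\fr{s}(\pi/2) = \fr{n}$, the horosphere foliation $S(\pi/2)$, which lies in a different class (its leaves admit no minimal representative, unlike the solvable foliation, whose special leaf is the ruled minimal hypersurface; alternatively $\fr{n}$ is nilpotent whereas $\fr{s}(0)$ is not). This yields exactly the two classes $S(\pi/2)$ and $S(0)$.

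I expect the main obstacle to be the congruence bookkeeping in the last two steps: once the subalgebra condition pins $\xi$ down to the plane $\span\{A_0, X_1\}$ (after the unitary normalization), one must show that the continuous parameter $\theta$ degenerates to only two orbit-equivalence classes rather than a one-parameter family, and that these two classes are genuinely distinct. The explicit conjugation $\exp(2\tan\theta\, X_1)$ makes the collapse for $\theta \ne \pi/2$ transparent and simultaneously explains why $\theta = \pi/2$ is excluded (the conjugating parameter diverges as $\theta \to \pi/2$), so the conceptual difficulty is mostly in setting up the reduction to subgroups of $S$ cleanly and in verifying the geometric invariant that separates the two surviving classes.
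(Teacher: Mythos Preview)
The paper does not prove this statement: it is quoted verbatim from \cite{BT03} and used as a black box, so there is no ``paper's own proof'' to compare against. Your outline is a faithful reconstruction of the Berndt--Tamaru argument specialized to $\CH^n$: reduce a cohomogeneity one action without singular orbit to a codimension-one connected subgroup of the solvable Iwasawa group $S$, classify hyperplane subalgebras $\fr{s}\ominus\mathbb{R}\xi$, normalize $\xi$ by the isotropy, and collapse the remaining parameter by an inner automorphism.

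Two small points worth tightening. First, your decomposability criterion is sound, but you should make explicit the logical step that $\omega\wedge\omega\neq 0$ forces $\omega$ not to vanish on \emph{any} hyperplane (hence in particular on $\xi^{\perp}$); you assert this implicitly. Second, in the collapse step you write ``induced by left translation by an element of $S$''; the precise statement is that if $g=\exp(2\tan\theta\,X_1)\in S$ satisfies $\mathrm{Ad}(g)\fr{s}(0)=\fr{s}(\theta)$, then left translation $L_g$ is an isometry of $(S,g)$ carrying each orbit $S(0)\cdot h$ to $gS(0)g^{-1}\cdot gh=S(\theta)\cdot gh$, so the foliations are orbit equivalent. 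Your separation of the two surviving classes (nilpotent $\fr{s}(\pi/2)$ versus non-nilpotent $\fr{s}(0)$, or minimal leaf versus none) is adequate.
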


Note that $\ominus$ means the orthogonal complement 
with respect to $\langle , \rangle$. 
By studying the orbits of these actions, 
one obtains the classification
of Lie hypersurfaces in $\CH^n$ up to isometric congruence.

\begin{Thm}[\cite{B}]
Every Lie 
hypersurface
in $\CH^n$ is isometrically congruent to 
the orbit $S(\theta).o$
through the origin $o$ for 
some
$\theta \in [0, \pi/2]$. 
\end{Thm}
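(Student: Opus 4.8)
The plan is to derive the statement from the preceding classification of cohomogeneity one actions, followed by an explicit analysis of the orbits of the two model subgroups. By the previous theorem, any cohomogeneity one action on $\CH^n$ without singular orbit is orbit equivalent to the action of $S(0)$ or of $S(\pi/2)$, and orbit equivalence is by definition realized by an isometry of $\CH^n$. Hence every Lie hypersurface is isometrically congruent to some orbit of $S(0)$ or to some orbit of $S(\pi/2)$, and it suffices to show that each such orbit is congruent to $S(\theta).o$ for some $\theta\in[0,\pi/2]$. The case of $S(\pi/2)$ is immediate: since $\fr{s}(\pi/2)=\fr{s}\ominus\mathbb{R}A_0=\fr{n}$ we have $S(\pi/2)=N$, whose orbits are the horospheres; the one-parameter group $\exp(tA_0)$ consists of isometries normalizing $N$ and permuting the horospheres transitively, so each of them is congruent to $S(\pi/2).o$, the case $\theta=\pi/2$.

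The heart of the argument is the case of $S(0)$, whose orbits form the solvable foliation. I would first note that, because $X_1\notin\fr{s}(0)$, the one-parameter subgroup $\{\exp(tX_1)\}$ is transverse to this foliation and meets each leaf in exactly one point, so every orbit of $S(0)$ has the form $S(0)\exp(tX_1)$ for a unique $t\in\mathbb{R}$. Applying the left translation $L_{\exp(-tX_1)}$, which is an isometry of $(S,g)$ as the metric is left-invariant, carries this leaf onto the orbit through $o$ of the conjugate subgroup $\exp(-tX_1)S(0)\exp(tX_1)$, whose Lie algebra is $\mathrm{Ad}(\exp(-tX_1))\fr{s}(0)=e^{-t\,\ad(X_1)}\fr{s}(0)$. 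The key computation is that $\ad(X_1)$ is two-step nilpotent: one checks $\ad(X_1)A_0=-(1/2)X_1$, $\ad(X_1)Y_1=Z_0$, and that $\ad(X_1)$ annihilates the remaining basis vectors, so $\ad(X_1)^2=0$ and $e^{-t\,\ad(X_1)}=\id-t\,\ad(X_1)$. A short calculation then gives
\[
\mathrm{Ad}(\exp(-tX_1))\fr{s}(0)=\fr{s}\ominus\mathbb{R}\bigl(X_1-(t/2)A_0\bigr).
\]
Comparing the normal line $\mathbb{R}(X_1-(t/2)A_0)$ with $\mathbb{R}(\cos\theta\,X_1+\sin\theta\,A_0)$ identifies this conjugated subalgebra with $\fr{s}(\theta(t))$, where $\tan\theta=-t/2$; since two connected subgroups of $S$ with the same Lie algebra coincide, the leaf $S(0)\exp(tX_1)$ is isometrically congruent to $S(\theta(t)).o$.

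It remains to control the range of $\theta$. For $t\le 0$ the relation $\tan\theta=-t/2$ yields $\theta\in[0,\pi/2)$, with $t=0$ recovering the minimal hypersurface $S(0).o$ and $t\to-\infty$ approaching the horosphere parameter $\theta=\pi/2$. The leaves with $t>0$ produce the lines $\mathbb{R}(X_1+(|t|/2)A_0)$, i.e.\ parameters $\theta\in(-\pi/2,0)$; to fold these into $[0,\pi/2]$ I would use the orthogonal automorphism $\sigma$ of $(\fr{s},\langle,\rangle)$ fixing $A_0$ and each $Y_i$ while negating each $X_i$ and $Z_0$. One checks directly that $\sigma$ preserves every bracket relation, hence induces an isometry of $\CH^n$ fixing $o$, and since $\sigma(X_1-(t/2)A_0)=-(X_1+(t/2)A_0)$ it sends $S(\theta).o$ to $S(-\theta).o$, supplying the remaining congruences. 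Combining the three cases gives the claim with $\theta\in[0,\pi/2]$.

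The main obstacle, in my view, is the core identification in the second paragraph: pulling a leaf of the solvable foliation back to the origin by a left translation is an isometry that a priori destroys the subgroup structure defining the family $\{S(\theta)\}$, yet it returns precisely another member $S(\theta).o$ of that family. Making this rigorous hinges on the fortunate two-step nilpotency of $\ad(X_1)$, which linearizes the conjugation into $\id-t\,\ad(X_1)$, together with the transversality ensuring that the single parameter $t$ sweeps out the whole foliation. The sign-folding via $\sigma$ and the $t\to-\infty$ limit are then routine bookkeeping.
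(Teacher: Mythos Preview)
The paper does not supply its own proof of this theorem: it is quoted verbatim from Berndt \cite{B}, with only the subsequent remark that $S(\pi/2).o$ is a horosphere, $S(0).o$ is the homogeneous ruled minimal hypersurface, and that each $S(\theta).o$ with $0<\theta<\pi/2$ is congruent to an equidistant hypersurface $S(0).p$. So there is nothing in the paper to compare your argument against line by line.

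That said, your proposal is a correct and self-contained derivation of the statement from the preceding classification theorem, and it is precisely the converse of the paper's remark: where the paper asserts that each $S(\theta).o$ arises as some leaf $S(0).p$, you start from an arbitrary leaf $S(0).\exp(tX_1)$, pull it back to the origin by left translation, and identify the resulting conjugate subgroup with $S(\theta(t))$ via the exact computation $e^{-t\,\ad(X_1)}=\id-t\,\ad(X_1)$. The two-step nilpotency of $\ad(X_1)$ is checked correctly, the orthogonal complement $\mathbb{R}(X_1-(t/2)A_0)$ is right, and the folding automorphism $\sigma$ (negating $X_i$ and $Z_0$) is indeed an orthogonal automorphism of $(\fr{s},\langle,\rangle)$, hence induces an isometry of $\CH^n$ fixing $o$ and interchanging $S(\theta).o$ with $S(-\theta).o$.

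The only step that deserves one more sentence of justification is the global claim that every $S(0)$-orbit meets $\{\exp(tX_1)\}$ in exactly one point. Transversality at $t=0$ is not by itself enough; what makes it work here is that $\fr{n}_0:=\fr{s}(0)\cap\fr{n}$ is an ideal of $\fr{n}$ (since $[X_1,\fr{n}_0]\subset\mathbb{R}Z_0\subset\fr{n}_0$) normalized by $A$, so $S(0)=A\ltimes N_0$ and $S(0)\backslash S\cong N_0\backslash N\cong\mathbb{R}$, with $t\mapsto\exp(tX_1)$ giving a global section. Once that is said, the argument is complete.
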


Note
that 
$S(\pi/2) = N$ and hence
the orbit $S(\pi/2).o$ is a horosphere. 
The
orbit $S(0).o$ is the homogeneous ruled minimal hypersurface. 
If $0 < \theta < \pi/2$, then
the orbit $S(\theta).o$ is 
isometrically congruent to $S(0).p$ for some $p$, 
which is
an equidistant hypersurface to $S(0).o$.

We now study the geometry of Lie hypersurfaces. 
The Lie hypersurface $S(\theta).o$ can be studied 
in terms of the metric Lie algebra 
$(\fr{s}(\theta) , \langle , \rangle)$, 
where this $\langle , \rangle$ denotes the restriction of 
$\langle , \rangle$ to $\fr{s}(\theta)$. 
First of all, by definition, 
$\fr{s}(\theta)$ is a codimension one subalgebra of $\fr{s}$, 
and it
is solvable. 
Denote 
by
	$$T := \cos(\theta)A_0 - \sin(\theta)X_1, \quad \fr{v}_0 := \span\{X_2, Y_2, \ldots, X_{n-1}, Y_{n-1}\} . $$
Then we have
the orthogonal decomposition
	\begin{equation}
        	\fr{s}(\theta) = \span\{T\} \oplus \span\{Y_1\} \oplus \fr{v}_0 \oplus \span\{Z_0\}.
        \end{equation}
One 
can directly see the following equations, which will be used hereafter.

\begin{Lem} \label{bracket}
Let $V, W \in \fr{v}_0$. Then we have 
	\begin{enumerate}[\normalfont (1)]
	  \item $[T, Y_1] = (1/2)\cos(\theta)Y_1 - \sin(\theta)Z_0$,
	  \item $[T, V] = (1/2)\cos(\theta)V$,
	  \item $[T, Z_0] = \cos(\theta)Z_0$, 
	  \item $[V, W] = \langle [V, W], Z_0 \rangle Z_0$, especially, $[X_k, Y_k] = Z_0$ for all $k =2, \ldots, n-1$.
        \end{enumerate}
\end{Lem}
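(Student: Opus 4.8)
The plan is to verify all four bracket identities by direct computation, expanding the vector $T = \cos(\theta)A_0 - \sin(\theta)X_1$ via bilinearity of the Lie bracket and substituting the structure constants recorded in the solvable model. The essential preliminary observation is that the brackets listed in the definition (together with their antisymmetric counterparts) are the \emph{only} nonzero brackets among the basis vectors; in particular $[X_i, X_j] = [Y_i, Y_j] = 0$ and $[X_i, Y_j] = 0$ whenever $i \neq j$, while $[Z_0, \cdot]$ vanishes on every basis vector except that $[A_0, Z_0] = Z_0$. Once this convention is in hand, each of the four identities reduces to combining at most two surviving terms.

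For $(1)$, $(2)$, and $(3)$ I would substitute the expression for $T$ and expand. Pairing with $Y_1$ produces $\cos(\theta)[A_0, Y_1] - \sin(\theta)[X_1, Y_1] = (1/2)\cos(\theta)Y_1 - \sin(\theta)Z_0$, which is $(1)$. Pairing with $V \in \fr{v}_0$ produces $\cos(\theta)[A_0, V] - \sin(\theta)[X_1, V]$; since $A_0$ acts as multiplication by $1/2$ on each $X_j$ and $Y_j$ we have $[A_0, V] = (1/2)V$, and $[X_1, V] = 0$ because $V$ has no $X_1$ or $Y_1$ component, yielding $(2)$. Pairing with $Z_0$ produces $\cos(\theta)[A_0, Z_0] - \sin(\theta)[X_1, Z_0] = \cos(\theta)Z_0$, which is $(3)$.

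For $(4)$, since $V, W \in \fr{v}_0 = \span\{X_2, Y_2, \ldots, X_{n-1}, Y_{n-1}\}$, the bracket $[V, W]$ is a linear combination of the brackets $[X_j, X_k]$, $[Y_j, Y_k]$, and $[X_j, Y_k]$ with $j, k \geq 2$; by the preliminary observation each of these is either $0$ or a multiple of $Z_0$. Hence $[V, W] \in \span\{Z_0\}$, which is precisely the assertion $[V, W] = \langle [V, W], Z_0 \rangle Z_0$ once one uses that $Z_0$ is a unit vector. The special case $[X_k, Y_k] = Z_0$ is then read off directly from the defining relations.

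I expect no genuine obstacle here: the lemma is pure bookkeeping, and the single point requiring care is the convention that every bracket absent from the definition vanishes, which is exactly what makes the expansions of $T$ collapse to the stated short expressions.
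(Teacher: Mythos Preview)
Your proposal is correct and matches the paper's approach: the paper simply asserts that one ``can directly see'' these identities from the defining bracket relations of the solvable model, and your argument carries out exactly that direct verification by expanding $T = \cos(\theta)A_0 - \sin(\theta)X_1$ bilinearly and using that unlisted brackets vanish. There is nothing to add; this is routine bookkeeping, as you note.
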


Finally in this subsection, 
we recall the formulas for Ricci curvatures of the Lie hypersurfaces. %
By direct calculations in terms of
$(\fr{s}(\theta), \langle, \rangle)$, 
one has the following. 

\begin{Prop}[\cite{HHT}] \label{ric}
The Ricci operator of $(\fr{s}(\theta), \langle, \rangle)$ satisfies
	\begin{enumerate}[\normalfont (1)]
	  \item $\Ric(T) = -(1/4)(2+(2n-1)\cos^2(\theta))T, $
	  \item $\Ric(Y_1) = -(1/4)(2+(2n-3)\cos^2(\theta))Y_1 + (n/2)\sin(\theta)\cos(\theta)Z_0,$
	  \item $\Ric(V) = -(1/4)(2+(2n-1)\cos^2(\theta))V$ for any $V \in \fr{v}_0$,
	  \item $\Ric(Z_0) = (n/2)\sin(\theta)\cos(\theta)Y_1 + (1/2)((n-1) - 2n\cos^2(\theta))Z_0.$
        \end{enumerate}
\end{Prop}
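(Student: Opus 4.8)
The plan is to compute the Ricci operator directly from the curvature formulas of Section~2, working inside the metric Lie algebra $(\fr{s}(\theta), \langle, \rangle)$ with respect to the orthonormal basis $\{T, Y_1, X_2, Y_2, \ldots, X_{n-1}, Y_{n-1}, Z_0\}$ and using only the bracket relations of Lemma~\ref{bracket}. Throughout I abbreviate $c = \cos(\theta)$ and $s = \sin(\theta)$, and I keep in view the orthogonal block decomposition $\fr{s}(\theta) = \span\{T\} \oplus \span\{Y_1\} \oplus \fr{v}_0 \oplus \span\{Z_0\}$, which organizes the whole computation.

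First I would determine the Levi-Civita connection. Applying the Koszul formula to triples of basis vectors, each coefficient $\langle \nabla_{E_i} E_j, E_k \rangle$ is an explicit combination of the three structure constants $\langle [E_i, E_j], E_k \rangle$, $\langle [E_k, E_i], E_j \rangle$, $\langle [E_k, E_j], E_i \rangle$, all of which are read off from Lemma~\ref{bracket}. Because the only nonzero brackets are those in which $T$ acts on the nilpotent directions $Y_1, \fr{v}_0, Z_0$ (with the single off-diagonal contribution $-sZ_0$ in $[T, Y_1]$) together with the nilpotent brackets $[X_k, Y_k] = Z_0$, only a short list of connection terms survives, for instance $\nabla_T Y_1 = -(s/2)Z_0$, $\nabla_{Y_1} Y_1 = (c/2)T$, $\nabla_{Z_0} Z_0 = cT$, $\nabla_{X_k} Y_k = (1/2)Z_0$, and the analogous ones. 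The computation is shortened further by noting that all vectors of $\fr{v}_0$ enter symmetrically, so it suffices to treat a single representative pair $X_k, Y_k$.

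With the connection in hand, I would compute the curvature operators $R(E_i, E_j)$ from the definition $R(X, Y) = \nabla_{[X, Y]} - \nabla_X \nabla_Y + \nabla_Y \nabla_X$ and then form $\Ric(X) = \sum_i R(E_i, X) E_i$ for $X = T, Y_1, V, Z_0$ in turn. Here the symmetry of $\fr{v}_0$ is again decisive: one evaluates the contribution of one pair $X_k, Y_k$ and multiplies by the number $n-2$ of such pairs, and it is precisely this counting that produces the $n$-dependence of the coefficients in (1)--(4). The diagonal entries follow by collecting the sectional-curvature-type contributions of the $2$-planes through $X$, while the only off-diagonal coupling is between $Y_1$ and $Z_0$: summing $\langle R(E_i, Z_0)E_i, Y_1 \rangle$ over the basis gives $cs$ from the $T$-direction and $(1/2)cs$ from each of the $n-2$ pairs in $\fr{v}_0$, which together yield the term $(n/2)sc$ of~(4), and symmetrically that of~(2).

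The main difficulty is not conceptual but one of organization: there are $2n$ basis vectors and hence a large number of connection coefficients and curvature terms, and one must accumulate them so that the summation over $\fr{v}_0$ delivers the stated $n$-dependent coefficients and the cross terms between $Y_1$ and $Z_0$ emerge with the correct sign and magnitude. Keeping the computation structured by the block decomposition $\span\{T\} \oplus \span\{Y_1\} \oplus \fr{v}_0 \oplus \span\{Z_0\}$ and exploiting the symmetry of $\fr{v}_0$ is what makes this bookkeeping manageable.
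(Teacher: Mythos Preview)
Your plan is exactly the ``direct calculation in terms of $(\fr{s}(\theta),\langle,\rangle)$'' that the paper invokes: it does not supply its own argument here but cites \cite{HHT}, where the Levi--Civita connection and curvature are computed from Lemma~\ref{bracket} via the Koszul formula just as you outline. The organization by the block decomposition $\span\{T\}\oplus\span\{Y_1\}\oplus\fr{v}_0\oplus\span\{Z_0\}$ and the use of the $\fr{v}_0$-symmetry to reduce to one pair $X_k,Y_k$ is precisely the intended route, and your count for the $Y_1$--$Z_0$ cross term is correct.
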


\section{Main result}

In this section we 
classify Ricci soliton Lie hypersurfaces in $\CH^n$.
First of all, we see that 
we have only to work on algebraic Ricci solitons.

\begin{Lem}\label{completely solvable}
For any $\theta \in [0,\pi/2]$, the Lie 
hypersurface
$S(\theta).o$ is a Ricci soliton 
if and only if $(\fr{s}(\theta), \langle, \rangle)$ is an algebraic Ricci soliton. 
\end{Lem}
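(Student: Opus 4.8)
The plan is to apply Theorem \ref{iff} directly, so the whole proof reduces to verifying that the hypotheses of that theorem are met by $(\fr{s}(\theta), \langle, \rangle)$. Theorem \ref{iff} states that for a simply-connected completely solvable Lie group with a left-invariant metric, being a Ricci soliton is equivalent to the corresponding metric Lie algebra being an algebraic Ricci soliton. So I need three things: that $S(\theta)$ is simply-connected, that it is solvable, and that it is completely solvable in the sense that every $\ad X$ has only real eigenvalues.

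The solvability and simple-connectedness are essentially already recorded in the excerpt: it was noted just after the definition of $T$ and $\fr{v}_0$ that $\fr{s}(\theta)$ is a codimension-one subalgebra of $\fr{s}$ and is solvable, and $S(\theta)$ is by construction the connected (simply-connected) solvable subgroup of the simply-connected solvable group $S$ with Lie algebra $\fr{s}(\theta)$. Therefore the only genuine content is complete solvability, i.e. the reality of the eigenvalues of $\ad X$ for all $X \in \fr{s}(\theta)$.

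The key step is thus to compute the eigenvalues of $\ad X$ using the bracket relations in Lemma \ref{bracket}. Writing a general $X = aT + bY_1 + V + cZ_0$ with $V \in \fr{v}_0$, one reads off from Lemma \ref{bracket} that $\ad X$ acts in a strictly upper-triangular-plus-diagonal fashion relative to the decomposition $\fr{s}(\theta) = \span\{T\} \oplus \span\{Y_1\} \oplus \fr{v}_0 \oplus \span\{Z_0\}$: the element $T$ scales $Y_1$, each $V$, and $Z_0$ by the real factors $(1/2)\cos(\theta)$, $(1/2)\cos(\theta)$, and $\cos(\theta)$ respectively (plus a nilpotent contribution into $Z_0$), while $Y_1$ and $\fr{v}_0$ bracket only into the central direction $Z_0$. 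Since $T$ is the only basis vector that acts with a nonzero diagonal (scaling) part and it does so with real coefficients, the eigenvalues of $\ad X$ are $0$ and real multiples of $\cos(\theta)$; all other contributions are nilpotent. Hence every $\ad X$ has real spectrum, so $\fr{s}(\theta)$ is completely solvable.

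I expect the main (and really the only) obstacle to be organizing the eigenvalue computation cleanly: one must confirm that the bracket relations in Lemma \ref{bracket} force $\ad X$ into a form whose characteristic polynomial factors over $\mathbb{R}$, keeping careful track of the fact that $Y_1$, $V$, and $Z_0$ only ever contribute to the central $Z_0$-direction and therefore act nilpotently, so that no spurious complex eigenvalues can arise. Once complete solvability is established, the conclusion is immediate from Theorem \ref{iff}, which gives both directions of the stated equivalence at once.
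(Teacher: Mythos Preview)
Your proposal is correct and follows essentially the same approach as the paper: reduce the claim to complete solvability of $\fr{s}(\theta)$ via Theorem~\ref{iff}, write a general $X = aT + bY_1 + V + cZ_0$, and use the bracket relations of Lemma~\ref{bracket} to see that $\ad X$ is (lower) triangular with real diagonal entries $0,\ (1/2)a\cos\theta,\ \ldots,\ a\cos\theta$ in the ordered basis $\{T, Y_1, X_2, Y_2, \ldots, X_{n-1}, Y_{n-1}, Z_0\}$. The paper carries out exactly this computation explicitly; your sketch of the triangular structure and the identification of the eigenvalues matches it.
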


\begin{proof}
Let $\theta \in [0,\pi/2]$.
Owing to Theorem~\ref{iff}, 
we have only to check that $\fr{s}(\theta)$ is completely solvable. 
Take any $X \in \fr{s}(\theta)$. 
One can write
\begin{equation}
X = a_1 T + a_2 Y_1 + V + a_3 Z_0 \quad (\mbox{for $V \in \fr{v}_0$}) . 
\end{equation}
According to Lemma~\ref{bracket}, we can show directly that 
\begin{equation}
	\begin{aligned}
	 \ad X(T) &= -(1/2)a_2\cos(\theta)Y_1 - (1/2)\cos(\theta)V \\ 
& \qquad - (a_2 \sin(\theta) + a_3\cos(\theta)) Z_0,\\
	 \ad X(Y_1) &= (1/2)a_1 \cos(\theta)Y_1 + a_1 \sin(\theta) Z_0,\\
	 \ad X(W) &= (1/2)a_1\cos(\theta)W + \langle [V, W], Z_0 \rangle Z_0,\\
	 \ad X(Z_0) &= a_1 \cos(\theta)Z_0 , 
	\end{aligned}
\end{equation}
where $W \in \fr{v}_0$. 
Hence $\ad X$ is represented by a triangular matrix 
with respect to the canonical basis 
\begin{equation}
\label{eq:basis}
\{ T , Y_1 , X_2 , Y_2 , \ldots , X_{n-1} , Y_{n-1} , Z_0 \} 
\end{equation}
of $\fr{s}(\theta)$. 
Therefore all eigenvalues of $\ad X$ are real, 
which completes
the proof. 
\end{proof}

From now on, we shall study
whether $(\fr{s}(\theta), \langle, \rangle)$ is an algebraic Ricci soliton. 
We separate this into three cases, namely 
$\theta = \pi/2$, $\theta \in ]0, \pi/2[$, and $\theta = 0$, 
and discuss them individually. 

Firstly, 
we see that $(\fr{s}(\pi/2), \langle, \rangle)$ is always an algebraic Ricci soliton.

\begin{Prop}
Let $n \ge 2$. 
Then $(\fr{s}(\pi/2), \langle, \rangle)$ is an algebraic Ricci soliton. 
\end{Prop}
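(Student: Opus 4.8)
The plan is to exhibit explicitly a constant $c \in \mathbb{R}$ and a derivation $D \in \Der(\fr{s}(\pi/2))$ with $\Ric = c \cdot \id + D$. First I would substitute $\theta = \pi/2$, so that $\cos(\pi/2) = 0$ and $\sin(\pi/2) = 1$, into the Ricci formulas of Proposition~\ref{ric}. The crucial simplification is that every off-diagonal contribution carries a factor $\sin(\theta)\cos(\theta)$, which vanishes at $\theta = \pi/2$; hence the Ricci operator becomes diagonal with respect to the basis in \eqref{eq:basis}. Explicitly, $\Ric$ acts as $-(1/2)\id$ on $\span\{T\} \oplus \span\{Y_1\} \oplus \fr{v}_0$ and as $((n-1)/2)\id$ on $\span\{Z_0\}$.

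Next I would record the Lie algebra structure of $\fr{s}(\pi/2)$. Putting $\theta = \pi/2$ into Lemma~\ref{bracket}, the only nonvanishing brackets among basis vectors are $[T, Y_1] = -Z_0$ and $[X_k, Y_k] = Z_0$ for $k = 2, \ldots, n-1$; in particular $Z_0$ spans the center and $\fr{s}(\pi/2)$ is a Heisenberg algebra. Since the defining equation $\Ric = c \cdot \id + D$ forces $D = \Ric - c \cdot \id$, and $\Ric$ is diagonal, $D$ is automatically diagonal, scaling the horizontal part $\span\{T\} \oplus \span\{Y_1\} \oplus \fr{v}_0$ by $\lambda := -(1/2) - c$ and scaling $\span\{Z_0\}$ by $\mu := (n-1)/2 - c$. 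Because such a $D$ preserves this grading, the derivation condition holds trivially on every vanishing bracket, so the whole requirement collapses to the single condition arising from a bracket of the form $[\,\cdot\,,\,\cdot\,] = \pm Z_0$, namely $\mu = 2\lambda$.

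Finally I would solve for $c$. The relation $\mu = 2\lambda$ reads $(n-1)/2 - c = 2(-(1/2) - c)$, which forces $c = -(n+1)/2$, whence $\lambda = n/2$ and $\mu = n$, and these indeed satisfy $\mu = 2\lambda$. With this choice $D = \Ric + ((n+1)/2)\id$ is a derivation, and $\Ric = c \cdot \id + D$ exhibits $(\fr{s}(\pi/2), \langle, \rangle)$ as an algebraic Ricci soliton. I do not expect a genuine obstacle here: the argument is short once one observes the diagonalization at $\theta = \pi/2$, and the only delicate point is the bookkeeping that reduces the derivation property to the single scalar constraint $\mu = 2\lambda$ and then fixes the correct value of $c$.
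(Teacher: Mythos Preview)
Your proposal is correct and follows essentially the same approach as the paper: both diagonalize $\Ric$ at $\theta=\pi/2$, take $c=-(n+1)/2$, and set $D=\Ric-c\cdot\id$, which then scales the horizontal subspace by $n/2$ and $Z_0$ by $n$. The only cosmetic difference is that the paper writes the matrices directly in the basis $\{X_1,Y_1,\ldots,X_{n-1},Y_{n-1},Z_0\}$ and asserts that one checks $D\in\Der(\fr{s}(\pi/2))$ by hand, whereas you keep the basis \eqref{eq:basis} (with $T=-X_1$), explain how the constraint $\mu=2\lambda$ arises from the Heisenberg bracket, and solve for $c$; this adds motivation but is the same argument.
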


\begin{proof}
We consider the basis 
$\{ X_1, Y_1, \ldots, X_{n-1}, Y_{n-1}, Z_0 \}$ of $\fr{s}(\pi/2)$. 
Denote by $\Ric$
the matrix expression of the Ricci operator of $\fr{s}(\pi/2)$ 
with respect to this basis. 
Then Proposition~\ref{ric} yields that
\begin{equation}
\Ric = \frac{1}{2} \left( \begin{array}{cccc}
			-1	&	&	& 	\\
			    	&\ddots	&       & 	\\
			    	&	&  -1	& 	\\
			    	&	&	& n-1 \\
	  	\end{array} \right) . 
\end{equation}
In order to show that it is an algebraic Ricci soliton, we define 
\begin{equation}
c := - \frac{n+1}{2} , \quad 
D := \frac{n}{2} \left( \begin{array}{cccc}
			1	&	&	& 	\\
			    	&\ddots	&       & 	\\
			    	&	&1	& 	\\
			    	&	&	& 2	\\
	  	\end{array} \right) . 
\end{equation}
Then 
 one 
can show directly that 
$\Ric = c \cdot \id + D$ and $D \in \Der(\fr{s}(\pi/2))$. 
\end{proof} 

\begin{Remark}
Note that it has already 
been
known from \cite{L01} that $(\fr{s}(\pi/2), \langle, \rangle)$ 
is an algebraic Ricci soliton. 
In fact, $\fr{s}(\pi/2)$ is the Heisenberg Lie algebra, 
and $\langle, \rangle$ is the standard inner product. 
\end{Remark}

Secondly, 
we consider the 
case $\theta \in ] 0, \pi/2[$. 
In this case, 
$(\fr{s}(\theta), \langle, \rangle)$ cannot be an algebraic Ricci soliton. 
The proof starts with the following lemma for a derivation of $\fr{s}(\theta)$. 

\begin{Lem}\label{lem1}
Let $n \ge 2$.
If $\theta \in [ 0, \pi/2 [$, 
then $\langle D(Z_0), Y_1 \rangle = 0$ holds for any $D \in \Der(\fr{s}(\theta))$. 
\end{Lem}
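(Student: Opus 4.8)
The plan is to exploit the derivation condition $D[U,V]=[D(U),V]+[U,D(V)]$ against the bracket relations collected in Lemma~\ref{bracket}, and to extract the single scalar $\langle D(Z_0),Y_1\rangle$ by pairing the derivation identities with $Y_1$. Writing $D$ as an arbitrary linear endomorphism of $\fr{s}(\theta)$ with unknown entries in the basis \eqref{eq:basis}, I would first record what the derivation property forces on $D(Z_0)$, $D(Y_1)$, and $D(T)$, since $Z_0$ arises as a bracket in several ways: $Z_0=[X_k,Y_k]$ for each $k$, and also $[T,Z_0]=\cos(\theta)Z_0$, $[T,Y_1]=(1/2)\cos(\theta)Y_1-\sin(\theta)Z_0$. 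The strategy is that these redundant presentations of $Z_0$ overdetermine $D(Z_0)$, and comparing them should pin down the $Y_1$-component.

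Concretely, I would apply $D$ to the relation $[T,Z_0]=\cos(\theta)Z_0$ to get
\begin{equation}
[D(T),Z_0]+[T,D(Z_0)]=\cos(\theta)D(Z_0),
\end{equation}
and then take the inner product of both sides with $Y_1$. On the right this produces $\cos(\theta)\langle D(Z_0),Y_1\rangle$. On the left I would expand $D(T)$ and $D(Z_0)$ in the basis and use Lemma~\ref{bracket} to compute the relevant brackets; the key observation is that $Y_1$ appears as a bracket output only through $[T,Y_1]$, so the $\langle\,\cdot\,,Y_1\rangle$ pairing isolates very few terms. Since $\fr{s}(\theta)$ is completely solvable and $\ad X$ is triangular (as shown in Lemma~\ref{completely solvable}), the derived algebra is spanned by $Y_1,\fr{v}_0,Z_0$, and in fact $Z_0$ is central-like in the sense that $[\fr{s}(\theta),Z_0]\subseteq\span\{Z_0\}$; this should force the $Y_1$-component of $[T,D(Z_0)]$ to be governed purely by the $Y_1$-coefficient of $D(Z_0)$, giving a relation of the form $(\tfrac12\cos\theta-\cos\theta)\langle D(Z_0),Y_1\rangle=0$ up to terms I must verify vanish.

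I would supplement this with the identity obtained by applying $D$ to $[X_k,Y_k]=Z_0$ for some $k\ge 2$, namely $[D(X_k),Y_k]+[X_k,D(Y_k)]=D(Z_0)$, and pair with $Y_1$; because brackets among $\fr{v}_0$ and $Z_0$ never produce a $Y_1$-component (Lemma~\ref{bracket}(4) and the $[T,\cdot]$ formulas show $Y_1$ only comes from $[T,Y_1]$), this should yield $\langle D(Z_0),Y_1\rangle$ expressed through coefficients coupling $D$ to the $T$-direction, and combining the two relations should eliminate the remaining unknowns. The main obstacle I anticipate is bookkeeping: tracking which off-diagonal entries of $D$ can contribute a $Y_1$-component after bracketing, and ensuring that the hypothesis $\theta\neq\pi/2$ (so $\cos\theta\neq 0$) is exactly what makes the coefficient of $\langle D(Z_0),Y_1\rangle$ nonzero, allowing me to conclude it vanishes. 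The case $\theta=\pi/2$ must genuinely be excluded here, which is consistent with the fact that the horosphere \emph{is} an algebraic Ricci soliton, so the proof should visibly break down precisely when $\cos\theta=0$.
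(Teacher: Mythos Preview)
Your primary approach is exactly the paper's: apply the derivation identity to $[T,Z_0]=\cos(\theta)Z_0$, pair with $Y_1$, and use $[\fr{s}(\theta),Z_0]\subset\span\{Z_0\}$ together with the bracket table to obtain $\cos(\theta)\langle D(Z_0),Y_1\rangle=(1/2)\cos(\theta)\langle D(Z_0),Y_1\rangle$, which finishes since $\cos\theta\neq 0$. Your supplementary identity from $[X_k,Y_k]=Z_0$ is unnecessary---there are no ``remaining unknowns'' once you verify $\langle[D(T),Z_0],Y_1\rangle=0$---and in fact it is unavailable when $n=2$ since then $\fr{v}_0=\{0\}$, whereas the lemma must cover all $n\ge 2$.
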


\begin{proof}
Take any $D \in \Der(\fr{s}(\theta))$. 
By 
the
definition of 
a
derivation, one has 
	\begin{equation} \label{theta_eq0}
        \langle D[T, Z_0], Y_1 \rangle = \langle [D(T), Z_0], Y_1 \rangle + \langle [T, D(Z_0)], Y_1 \rangle.
        \end{equation}
We calculate the both sides of this equation. 
By
Lemma~\ref{bracket}, 
the left-hand side satisfies 
\begin{equation}\label{theta_eq1}
	\langle D[T, Z_0], Y_1 \rangle
         = \langle D(\cos(\theta)Z_0), Y_1 \rangle
         = \cos(\theta) \langle D(Z_0), Y_1 \rangle.
\end{equation}
Next we calculate the right-hand side. 
It also follows from Lemma~\ref{bracket} that
\begin{equation}
[\fr{s}(\theta) , Z_0] \subset \mathrm{span} \{ Z_0 \} . 
\end{equation}
Then the first term of the right-hand side of (\ref{theta_eq0}) is 
\begin{equation}\label{theta_eq2}
	\langle [D(T), Z_0], Y_1 \rangle = 0.
\end{equation}
In order to calculate the second term, 
take the canonical orthonormal basis 
\begin{equation}
\{ E_i \} = \{ T, Y_1, X_2, Y_2 \ldots, X_{n-1}, Y_{n-1}, Z_0 \} 
\end{equation} 
of $\fr{s}(\theta)$. 
Lemma~\ref{bracket} yields that
\begin{equation}
\langle [T, E_i], Y_1 \rangle = \left\{\begin{array}{ll}
        	(1/2)\cos(\theta) &\quad (E_i = Y_1), \\
                0 &\quad (\text{otherwise}) . 
          \end{array} \right. 
\end{equation} 
Then the second term of the right-hand side of (\ref{theta_eq0}) satisfies 
\begin{equation}\label{theta_eq3}
	\begin{aligned}
	\langle [T, D(Z_0)], Y_1 \rangle
         &= \langle [T, {\textstyle \irotwo \sum} \langle D(Z_0), E_i \rangle E_i], Y_1 \rangle\\
         &= {\textstyle \irotwo \sum} \langle D(Z_0), E_i \rangle \langle [T, E_i], Y_1 \rangle\\
         &= (1/2)\cos(\theta) \langle D(Z_0), Y_1 \rangle.\\
        \end{aligned}
\end{equation}
Altogether, we obtain 
\begin{equation}
\cos(\theta) \langle D(Z_0), Y_1 \rangle 
= 0 + (1/2) \cos(\theta) \langle D(Z_0), Y_1 \rangle . 
\end{equation}
Since $\theta \in [ 0, \pi/2 [$, this completes the proof. 
\end{proof}

\begin{Prop} \label{prop2}
Let $n \ge 2$. 
If $\theta \in ]0, \pi/2[$, 
then $(\fr{s}(\theta), \langle, \rangle)$ is not an algebraic Ricci soliton.
\end{Prop}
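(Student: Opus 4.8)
The plan is to argue by contradiction, exploiting the single off-diagonal entry of the Ricci operator together with the constraint on derivations already established in Lemma~\ref{lem1}. Suppose, to the contrary, that $(\fr{s}(\theta), \langle, \rangle)$ is an algebraic Ricci soliton. Then by definition there exist $c \in \mathbb{R}$ and $D \in \Der(\fr{s}(\theta))$ with $\Ric = c \cdot \id + D$, equivalently $D = \Ric - c \cdot \id$.

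The first step is to isolate the $Y_1$-component of $D(Z_0)$. Since $Y_1$ and $Z_0$ are distinct vectors of the orthonormal basis \eqref{eq:basis}, we have $\langle Z_0, Y_1 \rangle = 0$, so the identity term contributes nothing and
\begin{equation}
\langle D(Z_0), Y_1 \rangle = \langle \Ric(Z_0), Y_1 \rangle .
\end{equation}
By Proposition~\ref{ric}~(4), the right-hand side equals $(n/2)\sin(\theta)\cos(\theta)$.

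On the other hand, Lemma~\ref{lem1} asserts that $\langle D(Z_0), Y_1 \rangle = 0$ for every $D \in \Der(\fr{s}(\theta))$ whenever $\theta \in [0, \pi/2[$. Combining these two computations forces $(n/2)\sin(\theta)\cos(\theta) = 0$. But for $\theta \in ]0, \pi/2[$ we have $\sin(\theta) \neq 0$ and $\cos(\theta) \neq 0$, and $n \ge 2$, so this quantity is strictly positive. This contradiction proves that $(\fr{s}(\theta), \langle, \rangle)$ is not an algebraic Ricci soliton.

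I expect no genuine obstacle in this argument, since the essential difficulty has been front-loaded into Lemma~\ref{lem1}, whose proof detects the incompatibility between the derivation condition and the bracket relation $[T, Z_0] = \cos(\theta)Z_0$. The only point requiring care is to confirm that the scalar term $c \cdot \id$ cannot interfere with the off-diagonal component being tested, which is immediate from the orthogonality of $Y_1$ and $Z_0$; after that, it remains only to observe that the nonvanishing Ricci entry $\langle \Ric(Z_0), Y_1 \rangle$ lands precisely in the direction that Lemma~\ref{lem1} obstructs.
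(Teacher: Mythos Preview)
Your argument is correct and essentially identical to the paper's own proof: both assume the soliton condition, compare the $Y_1$-component of $\Ric(Z_0)$ (computed via Proposition~\ref{ric}) against the vanishing forced by Lemma~\ref{lem1}, and note that orthogonality of $Y_1$ and $Z_0$ kills the $c\cdot\id$ term. The only cosmetic difference is that you write $D = \Ric - c\cdot\id$ before pairing with $Y_1$, whereas the paper pairs $\Ric(Z_0) = cZ_0 + D(Z_0)$ with $Y_1$ directly.
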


\begin{proof}
We show this by contradiction. 
Assume that $(\fr{s}(\theta), \langle, \rangle)$ is an algebraic Ricci soliton. 
By definition, 
there exist $c \in \mathbb{R}$ and $D \in \Der(\fr{s}(\theta))$ such that 
\begin{equation} \label{theta_ric}
	\Ric = c \cdot \id + D . 
\end{equation}
One thus has 
\begin{equation}
\langle \Ric(Z_0), Y_1 \rangle 
= c \langle Z_0, Y_1 \rangle + \langle D(Z_0), Y_1 \rangle . 
\end{equation}
We calculate the both sides of this equation. 
By
Proposition~\ref{ric}, we have 
\begin{equation}
\langle \Ric(Z_0), Y_1 \rangle = (n/2) \sin(\theta)\cos(\theta) . 
\end{equation} 
On the other hand, Lemma~\ref{lem1} yields that
\begin{equation}
c \langle Z_0, Y_1 \rangle + \langle D(Z_0), Y_1 \rangle = 0 . 
\end{equation}
Since $\theta \in ] 0, \pi/2 {\irotwo [}$, this is a contradiction. 
\end{proof}

Lastly, we study the case $\theta = 0$. 
First of all, 
we consider the case $n > 2$, 
and show that $(\fr{s}(0), \langle, \rangle)$ is not an algebraic Ricci soliton. 
As in the 
previous
case, 
we start with the next lemma on a derivation. 
Note that $\fr{v}_0 \neq \{ 0 \}$ if $n>2$. 

\begin{Lem}\label{lem2}
Assume that $n > 2$, and let $D \in \Der(\fr{s}(0))$. 
Then we have 
\begin{enumerate}[\normalfont (1)]
  \item\label{lem2_1} $\langle D(A_0), A_0 \rangle = 0$, and 
  \item\label{lem2_2} $\langle D(X_{\iro k}), X_{\iro k} \rangle 
+ \langle D(Y_{\iro k}), Y_{\iro k} \rangle 
= \langle D(Z_0), Z_0 \rangle$ for ${\iro k} = 2, \ldots, n-1$. 
\end{enumerate}
\end{Lem}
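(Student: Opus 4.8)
The plan is to exploit the derivation identity $D[U,V] = [D(U),V] + [U,D(V)]$ on two carefully chosen bracket relations, reading off one component of each resulting equation. Since we are in the case $\theta = 0$, we have $T = A_0 \in \fr{s}(0)$, and Lemma~\ref{bracket} specializes to $[A_0, Y_1] = (1/2)Y_1$, $[A_0, V] = (1/2)V$ for $V \in \fr{v}_0$, $[A_0, Z_0] = Z_0$, and $[X_k, Y_k] = Z_0$ for $k = 2, \ldots, n-1$; in particular $[\fr{s}(0), Z_0] \subset \span\{Z_0\}$. Throughout I would expand images of basis vectors under $D$ in the canonical orthonormal basis \eqref{eq:basis} (with $T = A_0$) and compare components.

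For (1), I would apply $D$ to $[A_0, Z_0] = Z_0$, obtaining $D(Z_0) = [D(A_0), Z_0] + [A_0, D(Z_0)]$. Writing $D(A_0) = \langle D(A_0),A_0\rangle A_0 + \cdots$ and using $[\fr{s}(0), Z_0] \subset \span\{Z_0\}$, the first term collapses to $\langle D(A_0), A_0\rangle Z_0$, since only the $A_0$-part of $D(A_0)$ brackets nontrivially with $Z_0$. The second term is computed directly from the eigenvalues of $\ad A_0$ on $Y_1$, $\fr{v}_0$, and $Z_0$. Comparing components on the two sides, the $A_0$-, $Y_1$-, and $\fr{v}_0$-components force $D(Z_0) \in \span\{Z_0\}$, while the $Z_0$-component yields precisely $\langle D(A_0), A_0\rangle = 0$, proving~\eqref{lem2_1} and, as a byproduct, the auxiliary fact $D(Z_0) = \langle D(Z_0), Z_0\rangle Z_0$.

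For (2), the idea is to reuse this auxiliary fact. Applying $D$ to $[X_k, Y_k] = Z_0$ gives $\langle D(Z_0), Z_0\rangle Z_0 = [D(X_k), Y_k] + [X_k, D(Y_k)]$. Expanding $D(X_k)$ and $D(Y_k)$ in the basis and bracketing against $Y_k$ and $X_k$ respectively, only the $A_0$-components produce $Y_k$- and $X_k$-terms (via $\ad A_0$), whereas only the $X_k$- and $Y_k$-components produce $Z_0$-terms (via $[X_j, Y_k] = \delta_{jk} Z_0$, together with $[Y_1,Y_k]=0$). Reading off the $Z_0$-component then gives $\langle D(Z_0), Z_0\rangle = \langle D(X_k), X_k\rangle + \langle D(Y_k), Y_k\rangle$, which is exactly~\eqref{lem2_2}.

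The computations themselves are routine bookkeeping in the chosen basis. The one genuine subtlety, and the step I would flag as the main point, is recognizing that the proof of~\eqref{lem2_1} already pins $D(Z_0)$ down as a multiple of $Z_0$, and that this is precisely what makes the $Z_0$-component of the second derivation identity isolate the trace-type relation in~\eqref{lem2_2}; without it, the $Z_0$-coefficient of $D(Z_0)$ would not be the clean quantity $\langle D(Z_0),Z_0\rangle$. I would also take care to verify that no stray contributions from $Y_1$ or from $X_j, Y_j$ with $j \neq k$ survive, which is exactly where the relations $[X_j, Y_k] = \delta_{jk}Z_0$ and $[Y_1, Y_k] = 0$ are used.
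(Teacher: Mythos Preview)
Your proof is correct and follows essentially the same route as the paper: both apply the derivation identity to $[A_0, Z_0] = Z_0$ for~(1) and to $[X_k, Y_k] = Z_0$ for~(2), then read off the $Z_0$-component. One minor remark: the auxiliary fact $D(Z_0) \in \span\{Z_0\}$ that you flag as the main subtlety is actually unnecessary for~(2), since the $Z_0$-component of the left-hand side $D[X_k,Y_k] = D(Z_0)$ is $\langle D(Z_0), Z_0\rangle$ by definition, whether or not $D(Z_0)$ has other components --- the paper simply pairs each term of the derivation identity with $Z_0$ directly.
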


\begin{proof}
Take any $D \in \Der(\fr{s}({\irotwo 0}))$. 
The proof is similar to that of Lemma~\ref{lem1}. 
We use the canonical orthonormal basis 
of $\fr{s}(0)$, 
\begin{equation} 
\{ E_i \} = \{ A_0, Y_1, X_2, Y_2, \ldots, X_{n-1}, Y_{n-1}, Z_0 \} .
\end{equation} 

We show (1). By the definition of 
a
derivation, one has 
\begin{equation} \label{0_eq0}
\langle D[A_0, Z_0], Z_0 \rangle 
= \langle [D(A_0), Z_0], Z_0 \rangle 
+ \langle [A_0, D(Z_0)], Z_0 \rangle . 
\end{equation}
The
left-hand side of (\ref{0_eq0}) 
satisfies
	\begin{equation} \label{0_eq1}
        \langle D[A_0, Z_0], Z_0 \rangle 
= \langle D(Z_0), Z_0 \rangle . 
        \end{equation}
In order to calculate the right-hand side, one needs 
\begin{equation}
\langle [E_i, Z_0], Z_0 \rangle = \left\{\begin{array}{ll}
        	1 &\quad (E_i = A_0), \\
                0 &\quad (\text{otherwise}) . 
          \end{array}\right . 
\end{equation}
Hence we have 
	\begin{equation} \label{0_eq2}
	\langle [D(A_0), Z_0], Z_0 \rangle
         = \textstyle \sum \langle D(A_0), E_i \rangle 
\langle [E_i, Z_0], Z_0 \rangle
         = \langle D(A_0), A_0 \rangle.
        \end{equation}
Similarly, one can see that 
\begin{equation}
\langle [A_0, E_i], Z_0 \rangle = \left\{\begin{array}{ll}
        	1 &\quad (E_i = Z_0) , \\
                0 &\quad (\text{otherwise}) . 
          \end{array}\right. 
\end{equation}
This yields that
	\begin{equation} \label{0_eq3}
	\langle [T, D(Z_0)], Z_0 \rangle
         = {\textstyle \irotwo \sum} \langle D(Z_0), E_i \rangle \langle [T, E_i], Z_0 \rangle
         = \langle D(Z_0), Z_0 \rangle.
        \end{equation}
Altogether, we have 
\begin{equation}
\langle D(Z_0), Z_0 \rangle 
= \langle D(A_0), A_0 \rangle + \langle D(Z_0), Z_0 \rangle . 
\end{equation}
This completes the proof of the first assertion. 

We show (2). 
Take any $k = 2, \ldots , n-1$. 
We start from 
\begin{equation}
\langle D[X_{\irotwo k}, Y_{\irotwo k}], Z_0 \rangle 
= \langle [D(X_{\irotwo k}), Y_k ], Z_0 \rangle 
+ \langle [X_k, D(Y_k)] , Z_0 \rangle . 
\end{equation}
One can calculate each term as follows: 
\begin{equation}
\begin{aligned}
\langle D[X_{\irotwo k},   Y_{\irotwo k}], Z_0 \rangle &= \langle D(Z_0), Z_0 \rangle, \\
\langle [D(X_{\irotwo k}), Y_{\irotwo k}], Z_0 \rangle 
&= 
\textstyle {\irotwo \sum} \langle D(X_{\irotwo k}), E_{\irotwo i} \rangle \langle [E_{\irotwo i}, Y_{\irotwo k}], Z_0 \rangle 
= \langle D(X_{\irotwo k}), X_{\irotwo k} \rangle , \\
\langle [X_{\irotwo k}, D(Y_{\irotwo k})], Z_0 \rangle 
&= 
\textstyle {\irotwo \sum} \langle D(Y_{\irotwo k}), E_{\irotwo i} \rangle \langle [X_{\irotwo k}, E_{\irotwo i}], Z_0 \rangle 
= \langle D(Y_{\irotwo k}), Y_{\irotwo k} \rangle.
\end{aligned}
\end{equation}
This completes the proof of the second assertion. 
\end{proof}

\begin{Prop}
Let $n > 2$. 
Then $(\fr{s}(0), \langle, \rangle)$ is not an algebraic Ricci soliton. 
\end{Prop}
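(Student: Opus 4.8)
The plan is to argue by contradiction, exactly parallel to the proof of Proposition~\ref{prop2}, but now exploiting the two constraints recorded in Lemma~\ref{lem2}. First I would specialize Proposition~\ref{ric} to $\theta = 0$. Since $T = \cos(0)A_0 - \sin(0)X_1 = A_0$ and the off-diagonal $\sin(\theta)\cos(\theta)$ terms vanish, the Ricci operator becomes diagonal with respect to the canonical basis $\{A_0, Y_1, X_2, Y_2, \ldots, X_{n-1}, Y_{n-1}, Z_0\}$, with
\begin{equation}
\Ric(A_0) = -\tfrac{2n+1}{4}A_0, \quad
\Ric(Y_1) = -\tfrac{2n-1}{4}Y_1, \quad
\Ric(V) = -\tfrac{2n+1}{4}V \ (V \in \fr{v}_0), \quad
\Ric(Z_0) = -\tfrac{n+1}{2}Z_0.
\end{equation}

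Next, assuming $(\fr{s}(0), \langle, \rangle)$ is an algebraic Ricci soliton, there exist $c \in \mathbb{R}$ and $D \in \Der(\fr{s}(0))$ with $\Ric = c \cdot \id + D$, so that $D = \Ric - c \cdot \id$ is also diagonal in this basis. I would read off the relevant diagonal entries: $\langle D(A_0), A_0 \rangle = -\tfrac{2n+1}{4} - c$, then $\langle D(X_k), X_k \rangle = \langle D(Y_k), Y_k \rangle = -\tfrac{2n+1}{4} - c$ for $k \ge 2$, and $\langle D(Z_0), Z_0 \rangle = -\tfrac{n+1}{2} - c$.

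Now I invoke Lemma~\ref{lem2}. Part~(1), $\langle D(A_0), A_0 \rangle = 0$, forces $c = -\tfrac{2n+1}{4}$. Substituting this value, the left-hand side of the identity in part~(2) becomes $\langle D(X_k), X_k \rangle + \langle D(Y_k), Y_k \rangle = 2\bigl(-\tfrac{2n+1}{4} - c\bigr) = 0$, whereas its right-hand side is $\langle D(Z_0), Z_0 \rangle = -\tfrac{n+1}{2} + \tfrac{2n+1}{4} = -\tfrac{1}{4}$. Hence Lemma~\ref{lem2}(2) would require $0 = -\tfrac{1}{4}$, a contradiction. Here the hypothesis $n > 2$ is exactly what guarantees $\fr{v}_0 \neq \{0\}$, so that an index $k$ (for instance $k = 2$) exists and part~(2) is genuinely available; this is the one place where the argument breaks down for $n = 2$, consistent with the theorem. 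I do not expect a serious obstacle: the only subtlety is ensuring the diagonal values of $D$ are computed correctly and that the two seemingly innocuous derivation constraints of Lemma~\ref{lem2} are jointly incompatible with the diagonal form forced by the soliton equation.
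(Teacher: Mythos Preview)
Your argument is correct and follows essentially the same route as the paper's proof: both use Lemma~\ref{lem2}(1) to pin down $\langle D(A_0), A_0\rangle = 0$ (equivalently $c = -\tfrac{2n+1}{4}$), deduce $\langle D(X_k), X_k\rangle = \langle D(Y_k), Y_k\rangle = 0$, and then obtain a contradiction from Lemma~\ref{lem2}(2) since $\langle D(Z_0), Z_0\rangle \neq 0$. The only cosmetic difference is that the paper phrases the contradiction as $\langle D(Z_0), Z_0\rangle \neq \langle D(A_0), A_0\rangle = 0$ without solving for $c$, whereas you compute the explicit value $-\tfrac14$.
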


\begin{proof}
We show this by contradiction. 
Assume that $(\fr{s}(0), \langle, \rangle)$ is an algebraic Ricci soliton. 
By definition,
there exist $c \in \mathbb{R}$ and $D \in \Der(\fr{s}(0))$ such that 
\begin{equation} 
\Ric = c \cdot \id + D . 
\end{equation}
Then, for any $E \in \fr{s}(0)$ with $\langle E, E \rangle = 1$, one has 
	\begin{equation} \label{0_c}
        \langle \Ric(E), E \rangle = \langle cE + D(E), E \rangle = c + \langle D(E), E \rangle . 
        \end{equation}
 Let $V \in \fr{v}_0$ be a unit vector, 
and we substitute $A_0$, $V$, and $Z_0$ for $E$. 
Then, Proposition~\ref{ric} yields that
\begin{equation} \label{1_c}
\begin{aligned}
c + \langle D(A_0), A_0 \rangle 
& = \langle \Ric(A_0), A_0 \rangle = - (2n+1) / 4 , \\ 
c + \langle D(V), V \rangle 
& = \langle \Ric(V), V \rangle = - (2n+1) / 4 , \\ 
c + \langle D(Z_0), Z_0 \rangle 
& = \langle \Ric(Z_0), Z_0 \rangle = - (n+1) / 2 . 
\end{aligned}
\end{equation}
These
equations yield that 
\begin{equation}\label{1_d}
	\langle D(A_0), A_0 \rangle 
= \langle D(V), V \rangle \ne \langle D(Z_0), Z_0 \rangle.
\end{equation}
Lemma~\ref{lem2} (\ref{lem2_1}) 
yields
that 
\begin{equation}
0 = 
\langle D(A_0), A_0 \rangle = 
\langle D(V), V \rangle.
\end{equation}
Since $n>2$, we have $X_2, Y_2 \in \fr{v}_0$, 
which can be substituted for $V$. 
Thus, owing to Lemma \ref{lem2} (\ref{lem2_2}), we have 
	\begin{equation}
        \langle D(Z_0), Z_0 \rangle = \langle D(X_2), X_2 \rangle + \langle D(Y_2), Y_2 \rangle = 0. 
        \end{equation}
This is a contradiction. 
\end{proof}

If $n = 2$, however, $(\fr{s}(0), \langle, \rangle)$ is indeed an algebraic Ricci soliton.

\begin{Prop}
Let $n = 2$. 
Then $(\fr{s}(0), \langle, \rangle)$ is an algebraic Ricci soliton. 
\end{Prop}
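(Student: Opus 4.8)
The plan is to produce explicit $c \in \mathbb{R}$ and $D \in \Der(\fr{s}(0))$ satisfying $\Ric = c \cdot \id + D$. First I would specialize all the structure data to $n = 2$. In this case $\fr{v}_0 = \{0\}$ and, at $\theta = 0$, $T = A_0$, so $\fr{s}(0)$ is the three-dimensional solvable Lie algebra with orthonormal basis $\{ A_0, Y_1, Z_0 \}$ whose only nonzero brackets are $[A_0, Y_1] = (1/2) Y_1$ and $[A_0, Z_0] = Z_0$ by Lemma~\ref{bracket}. Then, reading off Proposition~\ref{ric} at $\theta = 0$, $n = 2$, I would record that the Ricci operator is diagonal in this basis:
\begin{equation}
\Ric(A_0) = -(5/4) A_0, \quad \Ric(Y_1) = -(3/4) Y_1, \quad \Ric(Z_0) = -(3/2) Z_0 .
\end{equation}

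Because $\Ric$ is diagonal and $c \cdot \id$ is diagonal, for every choice of $c$ the candidate $D := \Ric - c \cdot \id$ is automatically diagonal, say $D = \mathrm{diag}(d_1, d_2, d_3)$ in the basis $\{ A_0, Y_1, Z_0 \}$. I would next determine when such a diagonal operator is a derivation: imposing $D[X,Y] = [D(X),Y] + [X,D(Y)]$ on the two nontrivial brackets $[A_0, Y_1]$ and $[A_0, Z_0]$ yields in both cases the single condition $d_1 = 0$, while $d_2$ and $d_3$ stay free (the bracket $[Y_1, Z_0] = 0$ imposes nothing). Hence $D = \Ric - c \cdot \id$ is a derivation exactly when its $A_0$-entry vanishes, i.e. $-(5/4) - c = 0$, forcing $c = -5/4$ and therefore $D = \mathrm{diag}(0, 1/2, -1/4)$. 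A final one-line check confirms that this $D$ satisfies the derivation identity, completing the verification of $\Ric = c \cdot \id + D$.

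The main point to appreciate, rather than a computational obstacle, is why this works for $n = 2$ while it failed just above for $n > 2$. In the higher-dimensional case the contradiction came from Lemma~\ref{lem2}(\ref{lem2_2}), which rigidly links the $\fr{v}_0$-diagonal entries of any derivation to its $Z_0$-entry and clashes with the Ricci values in (\ref{1_d}). When $n = 2$ the space $\fr{v}_0$ is empty, so that constraint disappears; simultaneously, the off-diagonal Ricci terms of Proposition~\ref{ric} vanish at $\theta = 0$. The only surviving requirement on a diagonal derivation is $\langle D(A_0), A_0 \rangle = 0$, and there is exactly enough freedom left in $d_2, d_3$ to match $\Ric - c \cdot \id$. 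Thus the collapse of dimension at $n = 2$ is precisely what removes the obstruction, and no real difficulty remains once $c$ is chosen to kill the $A_0$-entry.
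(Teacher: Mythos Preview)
Your proof is correct and follows essentially the same approach as the paper: both compute the diagonal Ricci matrix from Proposition~\ref{ric}, take $c = -5/4$, and verify that $D = \mathrm{diag}(0, 1/2, -1/4)$ is a derivation satisfying $\Ric = c \cdot \id + D$. Your argument is slightly more explanatory in that you derive the constraint $d_1 = 0$ rather than simply positing the values of $c$ and $D$, and your closing paragraph on why the $n>2$ obstruction disappears is a nice addition, but the core content is identical.
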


\begin{proof}
Denote by $\Ric$ the matrix expression of the Ricci operator 
with respect to the basis $\{ A_0, Y_1, Z_0 \}$ of $\fr{s}(0)$. 
Then, 
Proposition~\ref{ric}
yields that
\begin{equation}
\Ric
          = \left( \begin{array}{ccc}
			-5/4 &      &      \\
			     & -3/4 &      \\
			     &      & -3/2 \\
	  	\end{array} \right) . 
\end{equation} 
Let us define
\begin{equation}
c := - \frac{5}{4} , \quad 
D := \left( \begin{array}{ccc}
			0    &     &      \\
			     & 1/2 &      \\
			     &     & -1/4 \\
	  	\end{array} \right) . 
\end{equation} 
Then we can show directly that 
$\Ric = c \cdot \id + D$, and $D \in \Der(\fr{s}(0))$. 
\end{proof} 

\begin{Remark}
It is known that the Lie hypersurface 
$(\fr{s}(0), \langle, \rangle)$ is an algebraic Ricci soliton when $n=2$. 
In fact, Lauret (\cite{L11}) has classified three-dimensional 
algebraic Ricci solitons on solvable Lie algebras. 
Among them, there are Lie algebras 
$\fr{r}_\alpha = \span\{A, X, Y\}$, for $\alpha \in [-1, 1]$, 
whose bracket relations are given by 
\begin{equation}
[A, X] = X, \quad [A, Y] = \alpha Y . 
\end{equation}
Let $\langle, \rangle_0$ be 
the inner product so that the basis $\{A, X, Y\}$ is orthonormal. 
Then $(\fr{r}_\alpha, \langle, \rangle_0)$ is an algebraic Ricci soliton 
(\cite{L11}). 
Note that our Lie hypersurface $(\fr{s}(0), \langle, \rangle)$ is 
obviously isometric to $(\fr{r}_{1/2}, \langle, \rangle_0)$. 
\end{Remark}

In consequence of the arguments above we conclude the following theorem. 

\begin{Thm}
The Lie hypersurfaces $S(\theta).o$ in $\CH^n$ is a Ricci soliton 
if and only if 
\begin{enumerate}
\item[$(1)$]
$\theta = \pi/2$, or 
\item[$(2)$]
$n=2$ and $\theta = 0$. 
\end{enumerate}
\end{Thm}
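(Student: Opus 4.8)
The plan is to deduce this theorem directly from the results assembled in the present section, invoking Theorem~\ref{iff} only implicitly through Lemma~\ref{completely solvable}. The first step is to apply Lemma~\ref{completely solvable}, which tells us that for every $\theta \in [0, \pi/2]$ the Lie hypersurface $S(\theta).o$ is a Ricci soliton if and only if the metric Lie algebra $(\fr{s}(\theta), \langle, \rangle)$ is an algebraic Ricci soliton. This reduces the entire problem to deciding, for each $\theta$, whether $(\fr{s}(\theta), \langle, \rangle)$ satisfies the algebraic Ricci soliton equation \eqref{eq_algebraic}, a purely algebraic condition on the Ricci operator computed in Proposition~\ref{ric}.

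With this reduction in hand, I would partition the parameter interval $[0, \pi/2]$ into the three regimes already treated and read off the answer in each. For $\theta = \pi/2$, the Proposition asserting that $(\fr{s}(\pi/2), \langle, \rangle)$ is an algebraic Ricci soliton gives, via the reduction, that $S(\pi/2).o$ is a Ricci soliton; this yields alternative $(1)$. For $\theta \in ]0, \pi/2[$, Proposition~\ref{prop2} shows that $(\fr{s}(\theta), \langle, \rangle)$ is never an algebraic Ricci soliton, so no Ricci soliton arises in the open interval. For $\theta = 0$ the behavior depends on the dimension: when $n > 2$ the corresponding Proposition rules out the algebraic Ricci soliton property, whereas when $n = 2$ the final Proposition exhibits explicit $c$ and $D$ witnessing it, giving alternative $(2)$.

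Collecting these four statements, $(\fr{s}(\theta), \langle, \rangle)$ is an algebraic Ricci soliton precisely when $\theta = \pi/2$, or when $n = 2$ and $\theta = 0$; translating back through Lemma~\ref{completely solvable} produces exactly the claimed equivalence. The proof is therefore pure assembly, and the genuine difficulty has already been absorbed into the preceding results — specifically into the obstruction arguments of Lemma~\ref{lem1} and Proposition~\ref{prop2}, where a suitable derivation is excluded by testing the soliton equation against the off-diagonal component $\langle \Ric(Z_0), Y_1 \rangle$ (for $0 < \theta < \pi/2$), and of Lemma~\ref{lem2}, where it is excluded by the diagonal trace constraints among $A_0$, $\fr{v}_0$, and $Z_0$ (for $\theta = 0$, $n > 2$). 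The only point requiring a moment of care in the assembly is logical completeness: one must confirm that the three regimes $\{\pi/2\}$, $]0,\pi/2[$, and $\{0\}$ exhaust $[0,\pi/2]$, which they plainly do, so that both directions of the ``if and only if'' are covered.
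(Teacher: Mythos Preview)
Your proposal is correct and matches the paper's approach exactly: the paper presents this theorem with no separate proof, simply writing ``In consequence of the arguments above we conclude the following theorem,'' which is precisely the assembly you have spelled out. Your version is in fact more explicit than the paper's, carefully partitioning $[0,\pi/2]$ and invoking Lemma~\ref{completely solvable} together with the four propositions treating $\theta=\pi/2$, $\theta\in{]0,\pi/2[}$, $\theta=0$ with $n>2$, and $\theta=0$ with $n=2$.
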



\end{document}